\newtheorem{theorem}{Theorem}
\newtheorem{lemma}{Lemma}
\newtheorem{definition}{Definition}
\newcommand{\leqnomode}{\tagsleft@true}
\newcommand{\reqnomode}{\tagsleft@false}
\def\({\begin{eqnarray}}
\def\){\end{eqnarray}}
\def\[{\begin{eqnarray*}}
\def\]{\end{eqnarray*}}
\def\part#1#2{\frac{\partial #1}{\partial #2}}
\def\R{\mathbb{R}}
\def\N{\mathbb{N}}
\def\d{\mathrm{d}}
\def\tot#1#2{\frac{\d #1}{\d #2}}
\def\eps{\varepsilon}
\def\Norm#1{\left\| #1 \right\|}
\def\laplace{\Delta}
\def\wtx{\widetilde x}
\def\wx#1#2{{\wtx_{#1}}^{\; #2}}
\def\wxji{\wx{j}{i}}
\def\wxij{\wx{i}{j}}
\def\ta#1#2{\tau_{{#1}{#2}}}
\def\taij{\ta{i}{j}}
\def\c{{\mathfrak{c}}}
\def\s{{\mathfrak{s}}}
\def\grad{\nabla}
\def\S{\mathbb{S}}
\def\rev#1{{#1}}
\begin{document}

\title{Well posedness and asymptotic consensus in the Hegselmann-Krause model with finite speed of information propagation}   
\author{Jan Haskovec\footnote{Computer, Electrical and Mathematical Sciences \& Engineering, King Abdullah University of Science and Technology, 23955 Thuwal, KSA.
jan.haskovec@kaust.edu.sa}}

\date{}

\maketitle

\begin{abstract}
We consider a variant of the Hegselmann-Krause model of consensus formation where
information between agents propagates with a finite speed $\c$.
This leads to a system of ordinary differential equations (ODE) with state-dependent delay.
Observing that the classical well-posedness theory for ODE systems
does not apply, we provide a proof of global existence and uniqueness of solutions of the model.
We prove that asymptotic consensus is always reached in the spatially one-dimensional setting of the model,
as long as agents travel slower than $\c$.
We also provide sufficient conditions for asymptotic consensus in the spatially multi-dimensional setting.
\end{abstract}
\vspace{2mm}

\textbf{Keywords}: Hegselmann-Krause model, state-dependent delay, finite speed of information propagation, well-posedness, long-time behavior, asymptotic consensus.
\vspace{2mm}

\textbf{2010 MR Subject Classification}: 34K20, 34K60, 82C22, 92D50. 
\vspace{2mm}

\section{Introduction}\label{sec:Intro}
Individual-based models of collective behavior attracted the interest of researchers
in several scientific disciplines. A particularly interesting aspect of the dynamics of multi-agent systems
is the emergence of global self-organizing patterns, while individual agents
typically interact only locally. This is observed in various types of systems -
physical (e.g.,  spontaneous magnetization and crystal growth in classical physics),
biological (e.g., flocking and swarming, \cite{Camazine, Vicsek}) or socio-economical \cite{Krugman, Naldi, Castellano}.
The field of collective (swarm) intelligence also found many applications in engineering and robotics
\cite{Hamman, Valentini, Jadbabaie}.

An important area in the study of collective behavior is opinion dynamics \cite{Xu}.
In this paper, we focus on a widely used model referred to as the
Hegselmann-Krause consensus model \cite{HK}.
It describes the evolution of $N\in\N$ agents who adapt their opinions to the ones of their close neighbors.
Agent $i$'s opinion is represented by the quantity $x_i\in\R^d$, $d\in\N$, which is a function of time $t\geq 0$
and evolves according to the following dynamics
\( \label{eq:HK}
   \dot x_i(t) = \frac{1}{N-1} \sum_{j=1}^N \psi(|x_j(t) - x_i(t)|) (x_j(t) - x_i(t)), \qquad i=1,\ldots,N.
\)
The nonnegative function $\psi:[0,\infty)\to [0,\infty)$ is the so-called \emph{influence function} and measures
how strongly each agent is influenced by others depending on their distance.
The phenomenon of consensus finding in the context of \eqref{eq:HK} refers
to the (asymptotic) emergence of one or more \emph{opinion clusters} formed
by agents with (almost) identical opinions. \emph{Global consensus}
is the state where all agents have the same opinion, i.e.,
$x_i=x_j$ for all $i,j \in\{1,\dots,N\}$.
Various aspects of the consensus behavior of various modifications of \eqref{eq:HK}
have been studied in, e.g., \cite{Bha, Blondel, Canuto, Carro, Mohajer, Moreau, MT, JM, Wang, Wedin}.

In certain applications it may be relevant to account for the finite speed of information propagation.
For instance, in radio communication between satellites on the orbit or in outer space,
where the distances are not negligible with respect to the speed of light. However, the
finiteness of the speed of light may be a relevant factor even in terrestrial conditions,
for example for high-speed stock traders. This is best illustrated by the fact that a transatlantic fibre-optic cable has been laid
recently to speed up the connection between the stock trading firms and banks in the City of London
and New York by six milliseconds from the previous 65 milliseconds \cite{telegraph};
we note though that light in optical fiber cables travels about 30\% slower than in vacuum.
These possible applications motivate us to introduce a modification of the Hegselmann-Krause model \eqref{eq:HK}
where information propagates with a finite speed $\c>0$, called \emph{speed of light} in the sequel.
This means that agent located in $x_i=x_i(t)$ at time $t>0$ observes the
position of the agent $x_j$ at time $t-\taij$, where $\taij$ solves
\(  \label{eq:tau}
   \c \taij(t) = |x_i(t) - x_j(t-\taij(t))|,
\)
i.e., $\taij(t)$ is the time that information (light) needs to travel from location $x_j(t-\taij(t))$
to location $x_i(t)$, \rev{for all $i, j\in\{1,\dots,N\}$.}
In general it is neither guaranteed that a solution of \eqref{eq:tau}
exists nor that it is unique. This issue is of course related
to the possibility of agents traveling faster than the speed of light $\c$.
We shall formulate sufficient conditions for the well-posedness
of the model in the course of our analysis.
At the time being, let us assume that \eqref{eq:tau} is uniquely solvable with solution $\taij\geq 0$
and introduce the following notation
\[
   \wxji := x_j(t - \taij(t)).
\]
We also introduce the formal notation $\ta{i}{i}:=0$ and $\wx{i}{i}:=x_i(t)$
and, if no danger of confusion, we shall usually drop the explicit time dependence,
writing just $x_i$ for $x_i(t)$.
With this notation, the system that we study in this paper is written as
\(  \label{eq:0}
   \dot x_i = \frac{1}{N-1} \sum_{j=1}^N \psi(|\wxji - x_i|) \left( \wxji - x_i \right), \qquad i=1,\ldots,N.
\)
We shall also frequently use the shorthand notation for the \emph{influence rates}
\[  
   \widetilde\psi_{ij} := \psi\left( \left| \wxji - x_i \right| \right).
\]
The system \eqref{eq:0} is equipped with the initial datum
\(  \label{IC:0}
    x_i(t) = x_i^0(t)\qquad\mbox{for } i=1,\dots,N,\quad t\leq 0,
\)
where $x_i^0=x_i^0(t)$ are Lipschitz continuous paths on $(-\infty,0]$.
Depending on the speed of light $\c$ and the speed of the individual agents,
the initial datum is only relevant on a bounded time interval.
We shall make this dependence explicit later.

To our best knowledge, the Hegselmann-Krause model with time delay
has only been studied in \cite{Lu} and \cite{Choi}.
In \cite{Lu} the authors consider the case where the delays $\tau_{ij}$
are constant and a priori given. Also the influence rates
(in our notation $ \widetilde\psi_{ij}$) are fixed.
The authors prove that if the influence rates correspond
to a strongly connected graph, then the system reaches asymptotic consensus,
whatever the values of the delay $\tau_{ij}$, both in the case of linear and nonlinear coupling.
Their results are based on the construction of a suitable Lyapunov-Krasovskii functional.
The paper \cite{Choi} considers the case of variable delay $\tau=\tau(t)$, shared by all agents.
The influence rates depend on the agent distance. Based on convexity properties
and a Lyapunov functional, the authors prove asymptotic consensus in the system
under the assumption that the delay is uniformly small, measured by the decay
of the influence function $\psi$.

The main novelty introduced in this work is the fact that the delay
in \eqref{eq:0} depends on the configuration of the system
in a nontrivial way through \eqref{eq:tau}.
This \emph{state dependent delay} \cite{Smith} poses new analytical challenges:
The standard well-posedness theory for ODE systems
does not apply to \eqref{eq:tau}--\eqref{IC:0}.
Moreover, the known Lyapunov functional-type approaches
fail and new methods need to be developed to study
the asymptotic consensus behavior.
This paper aims at addressing these issues.

\section{Overview of main results}\label{sec:mainRes}
The benefits of this paper are threefold: First, observing that the classical theorems
of Peano and Picard-Lindel\"of do not apply to the system \eqref{eq:tau}--\eqref{IC:0},
we provide a proof of existence and uniqueness of its solutions (Section \ref{sec:twoAgents}).
Second, in Section \ref{sec:1D} we prove that asymptotic consensus is always reached
in the spatially one-dimensional setting of the model.
Finally, in Section \ref{sec:multiD} we provide sufficient conditions for asymptotic consensus
in the spatially multi-dimensional setting.

Let us observe that, in general, \eqref{eq:tau} can only be uniquely solvable if the agents
move with speeds strictly less than the speed of light $\c$.
This motivates us, for $0 < \s < \c$ and any interval $\mathcal{I}\subseteq\R$, to introduce the space
\[  
   C_\s(\mathcal{I},\R^d) := \left\{ \varphi: \mathcal{I} \to \R^d; \varphi \mbox{ is uniformly Lipschitz continuous on } \mathcal{I}
     \mbox{ with Lipschitz constant } \s\right\}.
\]
Then, it is natural to require that the initial datum $x^0$ in \eqref{IC:0} is an element of $C_\s(\mathcal{I},\R^d)^N$.
Moreover, to guarantee that agents observe the subluminal speed limit during the evolution driven by \eqref{eq:0},
we pose the assumption
\(   \label{cond:s}
    \s := \sup_{r>0} \psi(r)r  < \c.
\)
\rev{This assumption may seem restrictive at first glance, however,
for a fixed initial datum the boundedness of $\psi(r)r$ is in fact only required
on bounded $r$-intervals of length determined by the radius of the support
of the initial datum. Indeed, as we shall prove below, the system \eqref{eq:tau}--\eqref{IC:0}
is non-expansive, i.e., all particle trajectories are uniformly contained within a
compact set. 
In other words, once the initial datum is fixed, the asymptotic properties
of $\psi(r)$ as $r\to\infty$ are irrelevant.
On the other hand, let us point out that the assumption \eqref{cond:s}
is necessary to prove the well-posedness of the system \eqref{eq:tau}--\eqref{eq:0}
for all admissible initial data \eqref{IC:0}. Indeed, if we had $\psi(r)r \geq \c$ on a set of positive measure,
then \eqref{eq:0} would force (some) particles to move at the speed of light or faster,
destroying the unique solvability of \eqref{eq:tau}.
Consequently, \eqref{cond:s} cannot be relaxed.
}

Moreover, we assume that $\psi(r)>0$ for all $r>0$ and
that $\psi$ is uniformly Lipschitz continuous on $[0,\infty)$
with Lipschitz constant $L_\psi$.
Clearly, this assumption combined with \eqref{cond:s} implies
that $\psi$ is globally bounded.
Moreover, we have $\lim_{r\to 0} \psi(r)r = 0$, so that the global consensus, i.e.,
$x_i\equiv x_j$ for all $i,j\in \{1,\dots,N\}$, is an equilibrium for \eqref{eq:tau}--\eqref{eq:0}.

We then have the following result about the well-posedness of the system  \eqref{eq:tau}--\eqref{IC:0}.

\begin{theorem}\label{theorem:WP}
Let the above assumptions on the influence function $\psi$ be verified and
let the initial datum $x^0\in C_\s(\mathcal{I},\R^d)^N$.
Then the system \eqref{eq:tau}--\eqref{IC:0} possesses unique global solutions.
\end{theorem}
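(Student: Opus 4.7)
\medskip

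\noindent\textbf{Proof plan.} The heart of the matter is the state-dependent delay relation \eqref{eq:tau}, so the plan is to run a Banach-type fixed-point argument on the space of $C_\s$-trajectories, with \eqref{eq:tau} being solved pointwise for each trial trajectory. First, for a given configuration of Lipschitz curves $(y_1,\ldots,y_N) \in C_\s((-\infty,T],\R^d)^N$ and fixed $t\in[0,T]$, $i,j$, I would show that the scalar equation
\[
   F(\tau) := \c\tau - |y_i(t) - y_j(t-\tau)| = 0
\]
has a unique solution $\taij \geq 0$. Existence comes from the intermediate value theorem: $F(0)\leq 0$, and using the reverse triangle inequality together with the Lipschitz bound $|y_j(t-\tau)-y_j(t)|\leq\s\tau$ gives $F(\tau)\geq (\c-\s)\tau - |y_i(t)-y_j(t)| \to +\infty$. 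Uniqueness follows because two solutions $\tau_1<\tau_2$ would satisfy $\c(\tau_2-\tau_1)\leq |y_j(t-\tau_1)-y_j(t-\tau_2)|\leq\s(\tau_2-\tau_1)$, contradicting $\s<\c$. The same argument, applied with $y$ and $y'$ in place of two values of $\tau$, yields the crucial Lipschitz estimate
\[
   |\taij[y](t) - \taij[y'](t)| \leq \tfrac{2}{\c-\s} \, \|y - y'\|_{L^\infty},
\]
which is the quantitative backbone of the whole argument.

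\medskip

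Next, I would set up the fixed-point map $\Phi$ on the closed subset of $C_\s([0,T^*],\R^d)^N$ consisting of trajectories agreeing with $x_i^0$ on $(-\infty,0]$. Given such $y$, I define $\taij[y]$ via the above, set $\widetilde y_j^{\;i}(t) := y_j(t-\taij[y](t))$, and let
\[
   \Phi(y)_i(t) := x_i^0(0) + \int_0^t \frac{1}{N-1}\sum_{j=1}^N \psi(|\widetilde y_j^{\;i}(s) - y_i(s)|)\, (\widetilde y_j^{\;i}(s) - y_i(s))\, \d s.
\]
Assumption \eqref{cond:s} immediately gives the pointwise bound $|\Phi(y)_i'(t)|\leq \s$, so $\Phi$ does map $C_\s$ into itself; this is the non-expansiveness property highlighted in the introduction and it is what will ultimately allow the local result to be extended indefinitely.

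\medskip

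The main technical step — and the part I expect to be most delicate — is proving that $\Phi$ is a contraction on a short interval $[0,T^*]$ in the sup norm. For two inputs $y,y'$ I would estimate $|\widetilde y_j^{\;i}(s) - \widetilde y_j'^{\;i}(s)|$ by adding and subtracting $y_j'(s-\taij[y](s))$, using the Lipschitz bound on $y_j'$ together with the Lipschitz estimate for $\taij[\cdot]$ derived above, and finally the Lipschitz regularity of $\psi$ combined with the uniform bound $\psi(r)r\leq\s$. This yields
\[
   \|\Phi(y) - \Phi(y')\|_{L^\infty([0,T^*])} \leq C\, T^* \, \|y - y'\|_{L^\infty([0,T^*])},
\]
with $C$ depending only on $\c$, $\s$, $L_\psi$ and $N$. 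Choosing $T^*$ small enough produces a unique local fixed point.

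\medskip

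Finally, to pass from local to global, I would concatenate: because every solution automatically lies in $C_\s$, the length $T^*$ of the contraction interval can be chosen uniformly, so the local construction can be iterated on $[T^*,2T^*],[2T^*,3T^*],\ldots$ without any blow-up. The only subtlety in the restart is that the ``initial datum'' at $t = k T^*$ must be prescribed on the whole past $(-\infty,kT^*]$; but the portion lying in $(-\infty,0]$ is the original $x_i^0$, while the portion on $[0,kT^*]$ is the already-constructed $C_\s$ solution, so the hypotheses for the next step are met. This produces the desired unique global solution.
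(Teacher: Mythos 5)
Your proposal follows essentially the same route as the paper: a Banach fixed-point argument on the space of $\s$-Lipschitz trajectories extending the initial datum, with the state-dependent delay equation solved pointwise for each trial trajectory (existence via the intermediate value theorem, uniqueness and Lipschitz dependence on the trajectory from $\s<\c$), a sup-norm contraction on a short interval, and global continuation from the a priori speed bound $\psi(r)r\leq\s$; the paper merely writes this out for the symmetric two-agent scalar case and declares the general case analogous, whereas you treat the full system directly. One small imprecision: your contraction constant $C$ also depends on the size of the configuration on the interval considered, through the Lipschitz constant of $r\mapsto\psi(r)r$ on the relevant compact range (compare the paper's factor $\Norm{\psi}_{L^\infty}+2(|x^0(0)|+\s S^0+\s T)L_\psi$), so the uniformity of $T^*$ is not a consequence of the $C_\s$ property alone but of the at-most-linear growth of the radius (or of the a priori bound $R_x(t)\leq R_x^0$ proved later), which still suffices for the concatenation argument.
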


We shall prove this theorem for a simplified scalar equation in Section \ref{sec:twoAgents},
noting that the proof for the original system \eqref{eq:tau}--\eqref{IC:0} is principally the same.

Our second result is about asymptotic global consensus finding of the system.
For this sake, let us introduce for $t\in\R$ the diameter of the particle group,
\(   \label{def:diam}
   d_x(t) := \max_{i,j\in\{1,\ldots,N\}} |x_i(t) - x_j(t)|.
\)
Clearly, global consensus is equivalent to $d_x=0$.
As we shall prove in Section \ref{sec:1D}, the above assumptions on $\psi$
are sufficient for asymptotic consensus finding, i.e.,
$\lim_{t\to\infty} d_x(t) = 0$, in the spatially one-dimensional case.
Note that we do not exclude the possibility that global consensus may be reached
also in finite time.
In the multidimensional case the situation is more complicated
and we were not able to prove a similar "unconditional" global consensus finding
result as in 1D. 
However, in Section \ref{sec:multiD} (Theorem \ref{thm:shrinkage}) we provide
a sufficient condition for asymptotic consensus, formulated in terms of the speeds $\s$, $\c$
and the upper and lower bound of the influence function $\psi$ on a certain compact interval.
We strongly conjecture that asymptotic global consensus is always found even in the
multidimensional case, under the same assumptions as those of Theorem \ref{theorem:WP}.
We leave the proof of this conjecture for a future work.
We also note that, in general, \eqref{eq:0} does not conserve the mean value
$\frac{1}{N}\sum_{i=1}^N x_i$. Consequently, the (asymptotic) consensus vector cannot be
inferred from the initial datum in a straightforward way and can be seen as an emergent property of the system.

\subsection{Initial datum and solvability of \eqref{eq:tau}}
\begin{lemma} \label{lem:tau}
Let $x\in C_\s((-\infty,t]; \R^d)$ for some $t\in\R$ and $\s<\c$.
Then the equation
\(  \label{tauxz}
   \c \tau = |z-x(t-\tau)|
\)
is uniquely solvable in $\tau\geq 0$ for each $z\in\R^d$ and the solution $\tau$ satisfies
\(  \label{bound:tau}
   \frac{|z-x(t)|}{\c+\s} \leq \tau \leq \frac{|z-x(t)|}{\c-\s}.
\)
\end{lemma}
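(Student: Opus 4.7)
The plan is to recast the problem as finding the zero of a one-variable function and exploit the subluminal condition $\s<\c$ to get strict monotonicity with a quantitative rate.

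First I would define $F:[0,\infty)\to\R$ by
\[
  F(\tau) := \c\tau - |z - x(t-\tau)|.
\]
Since $x$ is continuous on $(-\infty,t]$, $F$ is continuous on $[0,\infty)$. I would evaluate $F(0) = -|z-x(t)| \leq 0$, so a root exists at $\tau=0$ exactly when $z=x(t)$ (in which case the bounds are trivially satisfied). In general, using the Lipschitz estimate $|x(t)-x(t-\tau)|\leq \s\tau$, I get $|z-x(t-\tau)| \leq |z-x(t)|+\s\tau$, hence $F(\tau) \geq (\c-\s)\tau - |z-x(t)|$, which is strictly positive for $\tau > |z-x(t)|/(\c-\s)$. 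By the intermediate value theorem, a root $\tau\geq 0$ exists.

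For uniqueness, the key observation is that $F$ is strictly increasing with modulus $\c-\s$: for $0\leq\tau_1<\tau_2$, the reverse triangle inequality together with Lipschitz continuity of $x$ gives
\[
  \bigl||z-x(t-\tau_2)| - |z-x(t-\tau_1)|\bigr| \leq |x(t-\tau_1)-x(t-\tau_2)| \leq \s(\tau_2-\tau_1),
\]
so
\[
  F(\tau_2) - F(\tau_1) \geq (\c-\s)(\tau_2-\tau_1) > 0.
\]
This rules out multiple roots and gives the unique $\tau$. Finally, the bounds \eqref{bound:tau} follow by reading off the two inequalities $\bigl||z-x(t)|-\c\tau\bigr| \leq \s\tau$ implied by the two forms of the triangle inequality applied to $|z-x(t-\tau)| = \c\tau$: one direction yields $\c\tau \leq |z-x(t)|+\s\tau$ (upper bound for $\tau$), the other yields $\c\tau \geq |z-x(t)|-\s\tau$ (lower bound for $\tau$).

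The proof is essentially routine; the only conceptual point is that the strict subluminal assumption $\s<\c$ is exactly what is needed to make $F$ strictly monotone with a uniform rate, simultaneously ensuring existence (the $(\c-\s)\tau$ growth eventually dominates) and uniqueness. If one only had $\s\leq\c$, the function $F$ could be constant on an interval, and both uniqueness and the quantitative upper bound in \eqref{bound:tau} could fail. So the main (and only) subtlety is to track the role of the gap $\c-\s>0$ throughout.
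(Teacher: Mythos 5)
Your proposal is correct and follows essentially the same route as the paper: your $F(\tau)=\c\tau-|z-x(t-\tau)|$ is just the negative of the paper's auxiliary function $u(s)=|z-x(t-s)|-\c s$, the existence argument via the intermediate value theorem and the linear growth $(\c-\s)\tau$ is identical, and your strict-monotonicity uniqueness step is the same Lipschitz/reverse-triangle estimate that the paper phrases as a contradiction between $\c|\tau_1-\tau_2|$ and $\s|\tau_1-\tau_2|$. The derivation of the two-sided bound \eqref{bound:tau} also matches the paper's.
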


\begin{proof}
For all \rev{$s\geq 0$} we readily have
\[
   |z-x(t-s)| \leq |z-x(t)| + |x(t)-x(t-s)| \leq  |z-x(t)| + \s s.
\]
Therefore, the function $u(s) := |z-x(t-s)| - \c s$ satisfies
\[
   u(0) = |z-x(t)| \geq 0, \qquad u(s) \leq |z-x(t)| + (\s-\c) s.
\]
Due to the continuity of $u$ and the fact that $\s-\c<0$, there exists some \rev{$0\leq \tau\leq \frac{|z-x(t)|}{\c-\s}$} such that $u(\tau)=0$.
Moreover, we have
\[
   |z-x(t)| \leq |z-x(t-\tau)| + |x(t-\tau)-x(t)| \leq (\c+\s) \tau,
\]
and the lower bound in \eqref{bound:tau} follows.

Finally, let us assume, for contradiction, that there exist nonnegative $\tau_1\neq \tau_2$,
both being solutions of \eqref{tauxz}. Then with the triangle inequality we have
\[
   \c |\tau_1-\tau_2| = \bigl| |z-x(\tau_1)| - |z-x(\tau_2)|\bigr| \leq |x(\tau_1)-x(\tau_2)| \leq \s |\tau_1-\tau_2|,
\]
which is a contradiction to $\s<\c$. Consequently, the solution $\tau$ is unique.
\end{proof}

Applying the result of Lemma \ref{lem:tau} for $t=0$, we see that the values of the initial datum
$x^0\in C_\s((-\infty,0]; \R^d)$ in \eqref{IC:0} are relevant at most on the time interval $\left[-\frac{d_x(0)}{\c-\s},0\right]$.

\section{Two agents}\label{sec:twoAgents}
We consider the system \eqref{eq:tau}--\eqref{IC:0} with two agents only, $N=2$,
restricted to the scalar setting $d=1$.
It reduces to a single equation
if we prescribe a symmetric initial datum, say
\[  
   x_1(t) = x^0(t), \qquad x_2(t) = -x^0(t),\qquad \mbox{for } t\leq 0.
\]
Then, we obviously have \rev{$\tau_{12} \equiv \tau_{21}$} and $x_1 \equiv - x_2$,
so that for $x:=x_1$,
\( \label{eq:x1d}
   \dot x = - \psi(|x+\wtx|) (x + \wtx),
\)
where $\wtx := x(t-\tau(t))$ and $\tau=\tau(t)$
solves the equation 
\(  \label{eq:tau1d}
   \c \tau(t) = |x(t) + x(t-\tau(t))|.
\)
By Lemma \ref{lem:tau} we prescribe the initial datum $x^0\in C_\s([-S^0,0];\R)$
for some $\s<\c$, where here and in the sequel we denote $S^0 := \frac{2|x^0(0)|}{\c-\s}$.
We may assume that $|x^0(0)| >0$, since for $x^0(0)=0$
\rev{
we have $S^0 = 0$ and $\tau(0)=0$, so that the unique solution of \eqref{eq:x1d}--\eqref{eq:tau1d}
is the equilibrium $x\equiv 0$.}

Let us observe that, despite the assumption about the Lipschitz continuity
and global boundedness of the response function $\psi$,
the standard Peano or Picard-Lindel\"{o}f theorems on existence/uniqueness
of solutions do not apply to the system \eqref{eq:x1d}--\eqref{eq:tau1d}.
Therefore, we shall demonstrate how unique solutions
of \eqref{eq:x1d}--\eqref{eq:tau1d} can be constructed
using the Banach fixed point theorem in the spirit of Picard-Lindel\"{o}f.
Let us fix the initial datum $x^0\in C_\s([-S^0,0],\R)$ and
for some $T>0$ to be specified later, define the space
\[
   \mathbb{X}_T := \left\{ \varphi\in C_\s([-S^0, T];\R);\, \varphi|_{[-S^0, 0]} \equiv x^0| _{[-S^0, 0]} \right\}.
\]
I.e., $\mathbb{X}_T$ is the space of Lipschitz continuous functions on the interval $[-S^0, T]$
with Lipschitz constant $\s$, which coincide with the initial datum $x^0$ on the interval $[-S^0, 0]$.
We equip the space $\mathbb{X}_T$ with the topology of uniform convergence,
i.e., with the $L^\infty$-norm on $[0,T]$.
Since the space of Lipschitz continuous functions on a compact set with Lipschitz constant $\s$
is closed with respect to the topology of uniform convergence, $\mathbb{X}_T$ is a Banach space. 

Let us define the Picard operator $\Gamma: \mathbb{X}_T \to \mathbb{X}_T$,
\( \label{Picard}
  (\Gamma\varphi)(t) :=  x^0(0) + \int_0^t &\psi(|\varphi(s)+\varphi(s-\tau(s))|) (\varphi(s)+\varphi(s-\tau(s))) \d s \qquad \mbox{for }t\in [0,T],
\)
and we set $\Gamma\varphi$ equal to $x^0$ on $[-S^0,0]$.
The function $\tau=\tau(s)$ is the unique solution of the equation
\( \label{Picard:tau}
   \c\tau(s) = \left| \varphi(s) + \varphi(s-\tau(s)) \right|.
\)
Let us note that, by Lemma \ref{lem:tau}, the above equation is indeed uniquely solvable for each $s\in [0,T]$.
Moreover, we have $s-\tau(s) \geq - S^0$ for all $s\in [0,T]$.

\begin{lemma}\label{lem:locEx}
Let the influence function $\psi$ satisfy \eqref{cond:s} with $\s$.
Then the operator $\Gamma$ defined by \eqref{Picard} 
maps the space $\mathbb{X}_T$ into itself and,
for sufficiently small $T>0$, it is a contraction on $\mathbb{X}_T$
with respect to the $L^\infty(0,T)$-norm.
\end{lemma}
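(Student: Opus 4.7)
The plan is to verify the two claims separately: first that $\Gamma$ sends $\mathbb{X}_T$ into itself, and second that, once $T$ is taken small enough, $\Gamma$ becomes a strict contraction with respect to the $L^\infty$-norm. Since $\varphi_1,\varphi_2\in\mathbb{X}_T$ coincide with $x^0$ on $[-S^0,0]$, all $L^\infty$-differences on $[-S^0,T]$ reduce to $L^\infty$-differences on $[0,T]$, which is the only place where nontrivial work is needed.

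For the self-map property, by construction $(\Gamma\varphi)|_{[-S^0,0]}\equiv x^0|_{[-S^0,0]}$, so only the $\s$-Lipschitz bound on $[0,T]$ and across $t=0$ needs checking. The decisive observation on $[0,T]$ is that the magnitude of the integrand in \eqref{Picard} is controlled as
$$\psi\bigl(|\varphi(s)+\varphi(s-\tau(s))|\bigr)\cdot|\varphi(s)+\varphi(s-\tau(s))| \leq \sup_{r>0}\psi(r)\,r = \s,$$
by hypothesis \eqref{cond:s}. Hence $\Gamma\varphi$ is $\s$-Lipschitz on $[0,T]$, and since $x^0$ itself has Lipschitz constant $\s$ and $(\Gamma\varphi)(0)=x^0(0)$, the estimate glues across the junction at $t=0$.

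For the contraction, I fix $\varphi_1,\varphi_2\in\mathbb{X}_T$, denote by $\tau_1,\tau_2$ the corresponding delays from \eqref{Picard:tau}, and write the difference $(\Gamma\varphi_1-\Gamma\varphi_2)(t)$ as an integral of $\Phi(\varphi_1(s)+\varphi_1(s-\tau_1(s)))-\Phi(\varphi_2(s)+\varphi_2(s-\tau_2(s)))$, where $\Phi(r):=\psi(|r|)\,r$. Every element of $\mathbb{X}_T$ takes values in a compact set determined by $x^0(0)$, $\s$, and $T$, on which $\Phi$ has a finite Lipschitz constant $L_\Phi$ coming from the Lipschitz continuity and boundedness of $\psi$. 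Splitting the argument of $\Phi$ by the triangle inequality, inserting the intermediate value $\varphi_1(s-\tau_2(s))$, and using that $\varphi_1$ is $\s$-Lipschitz, the integrand admits the bound
$$L_\Phi\bigl(2\|\varphi_1-\varphi_2\|_{L^\infty(0,T)} + \s\,|\tau_1(s)-\tau_2(s)|\bigr).$$

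The main obstacle will be controlling $|\tau_1(s)-\tau_2(s)|$. I would subtract the two instances of \eqref{Picard:tau}, apply the reverse triangle inequality, insert the same intermediate value $\varphi_1(s-\tau_2(s))$, and once more use that $\varphi_1$ is $\s$-Lipschitz, to obtain
$$\c\,|\tau_1(s)-\tau_2(s)| \leq 2\|\varphi_1-\varphi_2\|_{L^\infty(0,T)} + \s\,|\tau_1(s)-\tau_2(s)|,$$
so that $|\tau_1(s)-\tau_2(s)| \leq \frac{2}{\c-\s}\|\varphi_1-\varphi_2\|_{L^\infty(0,T)}$. This is precisely the point at which the subluminality hypothesis $\s<\c$ is indispensable. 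Substituting back and integrating over $[0,T]$ yields
$$\|\Gamma\varphi_1-\Gamma\varphi_2\|_{L^\infty(0,T)} \leq T\,L_\Phi\,\frac{2\c}{\c-\s}\,\|\varphi_1-\varphi_2\|_{L^\infty(0,T)},$$
which is a strict contraction as soon as $T<\frac{\c-\s}{2\c L_\Phi}$, completing the proof.
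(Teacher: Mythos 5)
Your proposal is correct and follows essentially the same route as the paper: the self-map property from the bound $\psi(r)r\le\s$ in \eqref{cond:s}, the local Lipschitz constant of $r\mapsto\psi(|r|)r$ on the compact range of $\mathbb{X}_T$, insertion of an intermediate value at the shifted time, and resolution of the implicit delay difference via \eqref{Picard:tau} using $\s<\c$, arriving at the same contraction factor $\frac{2\c T}{\c-\s}L_\Phi$. The only cosmetic differences are that you solve for $|\tau_1-\tau_2|$ before substituting back (the paper eliminates the delay difference in place) and you use the Lipschitz continuity of $\varphi_1$ where the paper uses that of $\varphi_2$, which is an immaterial symmetry.
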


\begin{proof}
Let us pick some $\varphi\in\mathbb{X}_T$, then \eqref{cond:s} immediately implies that
$\Gamma\varphi$ is Lipschitz continuous with constant $\s$ on $[0,T]$.
Since $\Gamma\varphi$ is by definition equal to $x^0$ on $[-S^0,0]$,
it is Lipschitz continuous on the whole interval $[-S^0,T]$ with the same constant.
Therefore, $\Gamma$ maps $\mathbb{X}_T$ into itself.

To prove the contractivity of $\Gamma$, let us pick $\varphi_1, \varphi_2 \in \mathbb{X}_T$
and calculate
\[
   \left|\Gamma\varphi_1(t) - \Gamma\varphi_2(t)\right| &\leq& \int_0^t \bigl| \psi(|\varphi_1(s)+\varphi_1(s-\tau_1(s))|)(\varphi_1(s)+\varphi_1(s-\tau_1(s))) \bigr. \\
      &&\qquad \bigl. -  \psi(|\varphi_2(s)+\varphi_2(s-\tau_2(s))|)(\varphi_2(s)+\varphi_2(s-\tau_2(s))) \bigr| \d s,
\]
where $\tau_1$ and, resp., $\tau_2$ are solutions of \eqref{Picard:tau} with $\varphi_1$, resp., $\varphi_2$.
Using the estimate
\[
   \bigl| \psi(\sigma)\sigma - \psi(\lambda)\lambda \bigr| \leq \left( \Norm{\psi}_{L^\infty(0,\infty)} + \min\{\sigma,\lambda\} L_\psi \right) |\sigma-\lambda|,
\]
for any $\lambda$, $\sigma>0$, where $L_\psi$ is the Lipschitz constant of $\psi$, we have
\[
   \left|\Gamma\varphi_1(t) - \Gamma\varphi_2(t)\right| &\leq&
       \left( \Norm{\psi}_{L^\infty(0,\infty)} + 2(|x^0(0)| + \s S^0 + \s T) L_\psi \right) \\
       && \qquad\times
       \int_0^t \bigl| |\varphi_1(s)+\varphi_1(s-\tau_1(s))| - |\varphi_2(s)+\varphi_2(s-\tau_2(s))| \bigr| \d s,
\]
where we used the bound
\[
   |\varphi_1(s)| \leq |x^0(0)| + \s(T+S^0) \qquad\mbox{for all } s\in [-S^0,T],
\]
implied by the Lipschitz continuity of $\varphi_1$.
We further calculate
\[
   \bigl| |\varphi_1(s)+\varphi_1(s-\tau_1(s))| - |\varphi_2(s)+\varphi_2(s-\tau_2(s))| \bigr| &\leq&
       \Norm{\varphi_1 - \varphi_2}_{L^\infty(0,T)} + |\varphi_1(s-\tau_1(s)) - \varphi_2(s-\tau_2(s))|  \\
       &\leq& 
       2 \Norm{\varphi_1 - \varphi_2}_{L^\infty(0,T)} + |\varphi_2(s-\tau_1(s)) - \varphi_2(s-\tau_2(s))| \\
       &\leq& 
       2 \Norm{\varphi_1 - \varphi_2}_{L^\infty(0,T)} + \s |\tau_1(s) - \tau_2(s)|,
\]
where we used the Lipschitz continuity of $\varphi_2$ for the last inequality.
Now, with \eqref{Picard:tau} we have
\[
   \s |\tau_1(s) - \tau_2(s)| = \s\c^{-1} \bigl| |\varphi_1(s)+\varphi_1(s-\tau_1(s))| - |\varphi_2(s)+\varphi_2(s-\tau_2(s)| \bigr|,
\]
so that
\[
    \bigl| |\varphi_1(s)+\varphi_1(s-\tau_1(s))| - |\varphi_2(s)+\varphi_2(s-\tau_2(s))| \leq 2 \left( 1 - \s\c^{-1} \right)^{-1} \Norm{\varphi_1 - \varphi_2}_{L^\infty(0,T)}.
\]
Thus we finally arrive at
\[
   \Norm{\Gamma\varphi_1 - \Gamma\varphi_2}_{L^\infty(0,T)} \leq
      2 T \left( 1 - \s\c^{-1} \right)^{-1}  \left( \Norm{\psi}_{L^\infty(0,\infty)} + 2(|x^0(0)| + \s S^0 + \s T) L_\psi \right)
       \Norm{\varphi_1 - \varphi_2}_{L^\infty(0,T)}
\]
and the claim follows.
\end{proof}

With Lemma \ref{lem:locEx} we constructed a unique solution of \eqref{eq:x1d}--\eqref{eq:tau1d}
on a sufficiently short time interval $[0,T]$. Obviously, we may extend the solution in time
as long as it remains Lipschitz continuous with Lipschitz constant $\s$.
But this follows directly from \eqref{cond:s},
\[
   |\dot x| \leq \psi(|x+\tilde x|)(x+\tilde x) \leq \s.
\]
Therefore, the solution is global in time.
In fact, we only need \eqref{cond:s} on the compact set $[0,2|x^0(0)|]$, since
the solution is nonincreasing for $t>0$:

\begin{lemma}
Let $x=x(t)$ be a solution of \eqref{eq:x1d}--\eqref{eq:tau1d} on the time interval $[0,T]$.
Then $|x(t)| \leq |x^0(0)|$ for all $t\in [0,T]$.
\end{lemma}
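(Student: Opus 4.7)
The plan is to argue by contradiction. Assume $M := |x^0(0)| > 0$ (the case $M = 0$ gives the trivial solution $x \equiv 0$), and suppose that $|x(t)| > M$ at some point of $[0,T]$. Set $t_0 := \inf\{t \in (0,T] : |x(t)| > M\}$; by continuity $|x(t_0)| = M$, and by the sign-flip symmetry of \eqref{eq:x1d}--\eqref{eq:tau1d} we may assume $x(t_0) = M$. The target is to show $\dot x(t_0) < 0$, which will contradict the existence of a sequence $t_n \searrow t_0$ with $x(t_n) > M$ (and hence with $x(t_n) > x(t_0)$, ruling out strict decrease at $t_0$).

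Since $\dot x(t_0) = -\psi(|M + \widetilde x(t_0)|)(M + \widetilde x(t_0))$ and $\psi > 0$ on $(0,\infty)$, it suffices to verify $M + \widetilde x(t_0) > 0$, where $\widetilde x(t_0) = x(t_0 - \tau(t_0))$. I distinguish two cases according to the sign of $t_0 - \tau(t_0)$. If $t_0 - \tau(t_0) \geq 0$, the defining property of $t_0$ gives $\widetilde x(t_0) \geq -M$; the borderline case $\widetilde x(t_0) = -M$ would force $\c \tau(t_0) = |M + \widetilde x(t_0)| = 0$, hence $\widetilde x(t_0) = x(t_0) = M$, contradicting $M > 0$. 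If instead $t_0 - \tau(t_0) < 0$, then $\widetilde x(t_0) = x^0(t_0 - \tau(t_0))$ and the Lipschitz continuity of $x^0$ with constant $\s$ yields $\widetilde x(t_0) \geq M - \s(\tau(t_0) - t_0)$. The equation $\c \tau(t_0) = |M + \widetilde x(t_0)|$ leaves two possibilities: $\widetilde x(t_0) = \c \tau(t_0) - M$ or $\widetilde x(t_0) = -\c \tau(t_0) - M$. Substituting the latter into the Lipschitz bound gives $(\s - \c)\tau(t_0) \geq 2M + \s t_0$, which is impossible because the left-hand side is nonpositive while the right-hand side is strictly positive. Hence $\widetilde x(t_0) = \c \tau(t_0) - M$, so $M + \widetilde x(t_0) = \c \tau(t_0) \geq 0$, with equality once more excluded by the same argument $\tau(t_0) = 0 \Rightarrow \widetilde x(t_0) = x(t_0) = M \neq -M$.

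The main obstacle lies in the second case, where $\widetilde x(t_0)$ is drawn from the initial data and is therefore not a priori controlled by the bound $|x| \leq M$ that defines $t_0$. The resolution is to combine the state-dependent delay relation \eqref{eq:tau1d} with the Lipschitz regularity of $x^0$ in order to rule out the ``wrong sign'' possibility $\widetilde x(t_0) < -M$, and it is exactly here that the subluminal assumption $\s < \c$ enters in an essential way.
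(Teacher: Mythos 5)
Your argument is correct in substance but follows a genuinely different route from the paper. The paper proves the pointwise sign identity $\mathrm{sign}(x(t)+\wtx(t))=\mathrm{sign}(x(t))$ for \emph{every} $t\in[0,T]$, directly from the two-sided Lipschitz bound $x-\s\c^{-1}|x+\wtx|\le\wtx\le x+\s\c^{-1}|x+\wtx|$, and then observes that $\tot{}{t}x(t)^2=-2\psi(|x+\wtx|)(x+\wtx)x\le 0$; this yields not only the stated bound but the monotone decay of $|x|$, which is exactly what the paper exploits immediately afterwards to conclude $x(t)\to 0$. You instead run a first-crossing-time contradiction: you only need the sign of $x+\wtx$ at the critical instant $t_0$, and you obtain it by splitting according to whether the retarded time $t_0-\tau(t_0)$ lies in $[0,t_0]$ (where the definition of $t_0$ controls $|\wtx(t_0)|$) or in the initial interval (where you combine the delay equation with the Lipschitz bound and $\s<\c$). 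This is a viable alternative; its price is that it delivers the bound $|x|\le|x^0(0)|$ without the monotonicity of $t\mapsto x(t)^2$ on which the paper's subsequent consensus argument rests. One small imprecision: in the second case you write $\wtx(t_0)\ge M-\s(\tau(t_0)-t_0)$, which anchors the Lipschitz estimate at $t=0$ and implicitly uses $x^0(0)=M$; after your sign-flip normalization you only know $|x^0(0)|=M$, so the sign of $x(0)$ is not at your disposal. The fix is immediate: anchor the estimate at $t_0$ instead, using that the concatenated trajectory is uniformly $\s$-Lipschitz on all of $[-S^0,T]$, which gives $\wtx(t_0)\ge M-\s\tau(t_0)$; the excluded branch $\wtx(t_0)=-\c\tau(t_0)-M$ then yields $(\s-\c)\tau(t_0)\ge 2M>0$, the same contradiction.
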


\begin{proof}
We prove that if $x\in C_\s([-S^0,T]; \R)$, then $\mbox{sign}(x+\wtx)=\mbox{sign}(x)$ for all $t\in[0,T]$.
Let us fix some $t\in[0,T]$. By Lipschitz continuity we have
\(  \label{eq:above}
   x - \s\c^{-1} |x+\wtx| = x - \s\tau \leq \wtx \leq x+\s\tau = x + \s\c^{-1} |x+\wtx|.
\)
Let us assume that $x\geq 0$ and, for contradiction, $x+\wtx<0$.
Then the left-hand side of \eqref{eq:above} gives
\[
   \wtx \geq x - \s\c^{-1} |x+\wtx| = x + \s\c^{-1} (x+\wtx),
\]
so that
\[
   \wtx \geq \frac{1+\s\c^{-1}}{1-\s\c^{-1}} x \geq 0,
\]
a contradiction to $x+\wtx<0$. We argue similarly if $x\leq 0$,
using the right-hand side of \eqref{eq:above}.
We conclude that, indeed, $\mbox{sign}(x+\wtx)=\mbox{sign}(x)$ for all $t\in[0,T]$.

Now we calculate 
\(  \label{eq:decay1d}
   \tot{}{t} x(t)^2 = - 2 \psi(|x+\wtx|) (x + \wtx) x \leq 0
\)
for all $t\in[0,T]$, which gives the claim.
\end{proof}

Note that \eqref{eq:decay1d} also implies the asymptotic consensus result
\[
   \lim_{t\to\infty} x(t) = 0.
\]
Indeed, \eqref{eq:decay1d} implies that $\lim_{t\to\infty} x(t) = \bar x$ for some $\bar x\in\R$.
Then
\[
   \lim_{t\to\infty} \tot{}{t} x(t)^2 = 
      -4 \psi(2|\bar x|) \bar x^2,
\]
and since $\psi(2|\bar x|)>0$ for $\bar x\neq 0$, we conclude that $\bar x=0$.


\section{One spatial dimension}\label{sec:1D}
We consider the system \eqref{eq:tau}--\eqref{IC:0} posed in one spatial dimension.
The local and global existence and uniqueness of solutions, given an initial datum $x^0\in C_\s((-\infty,0];\R)^N$,
is obtained analogously as in Section \ref{sec:twoAgents}.
Without loss of generality we assume that the particles at time $t=0$ are ordered according to their indices, i.e.,
\(  \label{ordering:IC:1D}
   x_1^0(0) \leq x_2^0(0) \leq \ldots \leq x_N^0(0).
\)

\begin{lemma} \label{lem:ordering1d}
The ordering \eqref{ordering:IC:1D} of the initial datum is preserved along the solution of \eqref{eq:tau}--\eqref{IC:0}, i.e.,
for all $t >0$ we have
\(   \label{ordering:1D}
   x_1(t) \leq x_2(t) \leq \ldots \leq x_N(t).
\)
\end{lemma}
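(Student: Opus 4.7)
My plan is to study the adjacent gap $g_i(t) := x_{i+1}(t) - x_i(t)$, which is nonnegative at $t=0$ by \eqref{ordering:IC:1D}. I will show $g_i(t) \geq 0$ for every $t > 0$ and every $i$ by establishing the pointwise differential inequality $|\dot g_i(t)| \leq C |g_i(t)|$ a.e., combined with a first-violation argument powered by Gr\"onwall's lemma.

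Setting $F(u) := \psi(|u|) u$ (which is the summand appearing in \eqref{eq:0} and is Lipschitz on bounded sets, since $\psi$ is globally bounded and Lipschitz and trajectories are $\s$-Lipschitz, hence contained in a compact set over any finite time interval), I write
\[
    \dot g_i = \frac{1}{N-1} \sum_{j=1}^N \left[ F\bigl(\widetilde x_j^{i+1} - x_{i+1}\bigr) - F\bigl(\widetilde x_j^i - x_i\bigr) \right].
\]
Lipschitz continuity of $F$ and the triangle inequality bound this expression by a constant times $\sum_j \bigl( |\widetilde x_j^{i+1} - \widetilde x_j^i| + |g_i| \bigr)$. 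The critical step is to control the first term by $|g_i|$: subtracting the delay equations \eqref{eq:tau} written for the pairs $(i,j)$ and $(i+1,j)$ and using the $\s$-Lipschitz regularity of $x_j$ supplied by \eqref{cond:s}, I obtain
\[
    \c |\tau_{ij} - \tau_{(i+1)j}| \leq |g_i| + \s |\tau_{ij} - \tau_{(i+1)j}|,
\]
whence $|\tau_{ij} - \tau_{(i+1)j}| \leq |g_i| / (\c - \s)$, and therefore $|\widetilde x_j^{i+1} - \widetilde x_j^i| \leq \s |g_i| / (\c - \s)$, uniformly in $j$. Combining these bounds yields $|\dot g_i| \leq C |g_i|$ with $C$ independent of $i$.

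For the contradiction step, I assume the ordering breaks at some positive time and set $T^\ast := \inf\{t > 0 : g_j(t) < 0 \mbox{ for some } j\}$. For $t < T^\ast$ every $g_j(t)$ is nonnegative, and the finiteness of the set of adjacent pairs together with the definition of $T^\ast$ yields an index $j^\dagger$ and a sequence $t_n \searrow T^\ast$ with $g_{j^\dagger}(t_n) < 0$, which forces $g_{j^\dagger}(T^\ast) = 0$ by continuity. Integrating $|\dot g_{j^\dagger}| \leq C |g_{j^\dagger}|$ from $T^\ast$ and applying Gr\"onwall's lemma to $|g_{j^\dagger}|$ forces $g_{j^\dagger}(t) \equiv 0$ on some interval $[T^\ast, T^\ast + \delta]$, contradicting $g_{j^\dagger}(t_n) < 0$ for $t_n$ near $T^\ast$. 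The hard part is the derivation of the differential inequality, specifically the bound on the delay difference $|\tau_{ij} - \tau_{(i+1)j}|$ purely in terms of $|g_i|$: this is where the state-dependent nature of the delays enters nontrivially, and where the subluminal assumption $\s < \c$ is essential, since it is what makes the implicit delay equation contractive and supplies the factor $(\c - \s)^{-1}$.
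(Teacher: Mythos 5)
Your proof is correct, but it takes a genuinely different route from the paper's. The paper disposes of the lemma with a soft, three-line argument: if two trajectories ever meet, $x_i(T)=x_j(T)$, then by uniqueness of the solution of \eqref{eq:tau} the delays $\tau_{ik}(T)$ and $\tau_{jk}(T)$ coincide for every $k$, hence $\dot x_i(T)=\dot x_j(T)$, and by uniqueness of solutions the two particles stick together for all later times; since continuous trajectories cannot exchange order without first meeting, the ordering persists. You replace this with a quantitative stability estimate on the adjacent gaps $g_i=x_{i+1}-x_i$: the bound $|\taij-\tau_{(i+1)j}|\leq |g_i|/(\c-\s)$, obtained by subtracting the two instances of \eqref{eq:tau} and absorbing the $\s|\taij-\tau_{(i+1)j}|$ term using $\s<\c$, is exactly the right way to tame the state-dependence of the delay, and it delivers $|\dot g_i|\leq C|g_i|$ with $C$ uniform on bounded time intervals (which suffices, since Gr\"onwall is applied locally at the first violation time). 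What your approach buys is self-containedness: the ``sticking'' of coinciding particles drops out of the Gr\"onwall estimate at the first touching time, rather than from the step in the paper where equality of velocities at a single instant is upgraded --- via uniqueness applied to a merged configuration --- to coincidence for all future times. The price is length, and a Lipschitz constant for $F(u)=\psi(|u|)u$ that depends on the time horizon, which you correctly note is harmless. One point worth making explicit in a final write-up: your uniform-in-$j$ delay estimate also covers the diagonal terms $j=i$ and $j=i+1$, where one of $\wxji$, $\wx{j}{i+1}$ is undelayed by the convention $\ta{i}{i}=0$; the subtraction of the two delay equations applies verbatim there with $\tau_{ii}=0$, so no separate treatment is needed.
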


\begin{proof}
The claim follows from 
the fact that whenever two particles collide,
they stick together for all future times.
Indeed, if $x_i(T) = x_j(T)$ for some $T\geq 0$
and some $i\neq j$, then $\dot x_i(T) = \dot x_j(T)$, and due to the uniqueness
of the solution, $x_i(t) = x_j(t)$ for all $t\geq T$.
\end{proof}

The following Lemma provides a general statement about trajectories
of particles traveling with speed less than $\c$.

\begin{lemma} \label{lem:nocrossing}
Fix $T\in\R$ and let the trajectories $x_i, x_j \in C_\s((-\infty, T];\R)$ with $\s<\c$.
Then we have
\(  \label{dist_est}
   \left| x_i - \wxji \right| > \frac12 \left| x_i - x_j \right| \qquad \mbox{for all } t\leq T.
\)
Moreover, if $x_i < x_j$ for some $t\leq T$, then
\( \label{ordering}
   x_i < \wxji,\qquad \wxij < x_j,\qquad \wxij < \wxji.
\)
\end{lemma}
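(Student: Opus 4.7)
The plan is to exploit the defining identity $\c\,\taij(t)=|x_i(t)-\wxji|$ together with the Lipschitz bound $|x_j(t)-x_j(t-\taij)|\le \s\,\taij$. Combining these two gives the key inequality
\[
  |x_j - \wxji| \le \s\,\taij = \frac{\s}{\c}\,|x_i - \wxji|,
\]
and symmetrically $|x_i-\wxij|\le \frac{\s}{\c}|x_j-\wxij|$. Everything in the lemma follows from these two simple estimates, because they force each observed position to lie close to the current position of the observed particle.

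For the first claim \eqref{dist_est}, I would apply the triangle inequality
\[
  |x_i-x_j| \le |x_i-\wxji| + |\wxji-x_j| \le \left(1+\tfrac{\s}{\c}\right)|x_i-\wxji|,
\]
which rearranges to $|x_i-\wxji|\ge \frac{\c}{\c+\s}|x_i-x_j|$. Since $\s<\c$ gives $\frac{\c}{\c+\s}>\frac12$, the desired bound follows (strictly whenever $x_i\neq x_j$, and trivially otherwise).

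For \eqref{ordering}, assuming $x_i<x_j$, I would first prove $\wxji>x_i$ by contradiction: if $\wxji\le x_i$, then $|x_i-\wxji|=x_i-\wxji$ and the inequality $|x_j-\wxji|\le\frac{\s}{\c}|x_i-\wxji|$ forces $x_j-\wxji\le\frac{\s}{\c}(x_i-\wxji)<x_i-\wxji$, i.e.\ $x_j<x_i$, contradicting $x_i<x_j$. The symmetric argument (swap $i\leftrightarrow j$) yields $\wxij<x_j$. For the remaining ordering $\wxij<\wxji$, I would sharpen the estimates: using $\wxji>x_i$ one obtains $\wxji-x_j\ge -\frac{\s}{\c}(\wxji-x_i)$, which rearranges to
\[
  \wxji \ge \frac{\c\,x_j+\s\,x_i}{\c+\s},
\]
and analogously $\wxij\le \frac{\c\,x_i+\s\,x_j}{\c+\s}$. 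Subtracting gives $\wxji-\wxij\ge \frac{\c-\s}{\c+\s}(x_j-x_i)>0$.

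I do not anticipate a genuine obstacle here; the whole lemma is essentially a bookkeeping exercise once one recognizes that the ratio $\s/\c<1$ controls how far a delayed position can drift from the current one. The only place requiring some care is ensuring strictness in the inequalities — in particular ruling out $\wxji=x_i$, which would imply $\taij=0$ and hence $\wxji=x_j(t)$, contradicting $x_i<x_j$. The remaining strict inequalities follow from the strict relation $\s<\c$ and the assumption $x_i<x_j$.
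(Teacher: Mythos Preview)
Your proof is correct and follows essentially the same route as the paper's: the key comparison $|x_j-\wxji|\le \frac{\s}{\c}|x_i-\wxji|$ (equivalently $|x_i-\wxji|>|x_j-\wxji|$), a triangle inequality for \eqref{dist_est}, and a contradiction argument for the ordering. Your version is in fact slightly sharper (you obtain the constant $\frac{\c}{\c+\s}$ rather than just $\frac12$) and you spell out the third relation $\wxij<\wxji$ explicitly, which the paper leaves to ``follow similarly''.
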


\begin{proof}
We have
\( \label{s<c}
     \left| x_i - \wxji \right| = \c\tau > \s\tau = \left| x_j -\wxji  \right|.
\)
Then the triangle inequality gives
\[
   \left| x_j - x_i \right| \leq \left| x_j - \wxji \right| + \left| \wxji - x_i \right| < 2 \left| \wxji - x_i \right|,
\]
and \eqref{dist_est} follows.

Moreover, let us assume that $x_i < x_j$. Then, if $\wxji \leq x_i$, we would have
\[
   \left| x_i - \wxji \right| \leq \left| x_j - \wxji \right|,
\]
a contradiction to \eqref{s<c}. Therefore, $\wxji < x_i$. The other relations in \eqref{ordering}
follow similarly. See Fig. \ref{fig:fig1} for illustration.
\end{proof}

\begin{figure}
\centerline{
\includegraphics[width=0.5\columnwidth]{./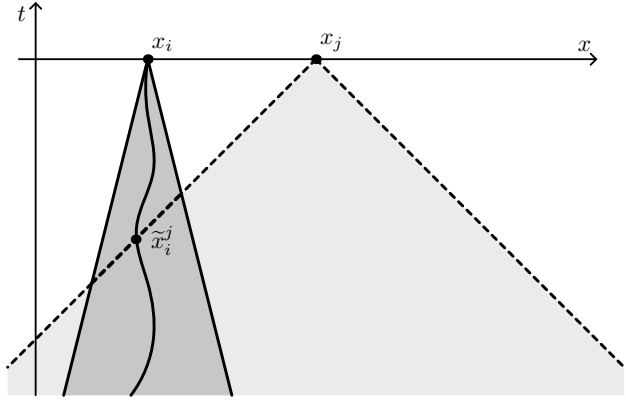}}
\caption{Agents located at $x_i$ and $x_j$ at time $t$.
The light gray area represents the light cone of agent $j$ for a given speed of light $\c>0$.
The dark gray area represents the "past cone" of agent $i$,
i.e., the set of all admissible trajectories given that the agent
travels with speed at most $\s<\c$. One possible trajectory
is plotted, and its intersection with the ray of light of agent $j$
is $\wxij$, i.e., the information about the position of $i$ that agent $j$
receives at time $t$.
}
\label{fig:fig1}
\end{figure}

\begin{theorem} \label{thm:1d}
Let $\s<\c$ and the initial datum $x^0\in C_\s([-S^0,0];\R)^N$ with $S^0=\frac{d_x(0)}{\s-\c}$.
Then we have, along the solutions of \eqref{eq:tau}--\eqref{IC:0},
\[
   \lim_{t\to\infty} d_x(t) = 0.
\]
\end{theorem}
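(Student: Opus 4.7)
My plan is to use the ordering preserved by Lemma \ref{lem:ordering1d} to show that $d_x(t)=x_N(t)-x_1(t)$ is monotone non-increasing, so that it admits a limit $\bar d\in[0,d_x(0)]$, and then to rule out $\bar d>0$ by deriving a uniform strictly negative upper bound on $\dot x_N$.

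First I would verify that $\dot x_1\geq 0$ and $\dot x_N\leq 0$. Indeed, for every $j\neq 1$ with $x_j>x_1$, \eqref{ordering} in Lemma \ref{lem:nocrossing} yields $\wx{j}{1}>x_1$. In the coincidence case $x_1(t)=x_j(t)$, setting $\tau=0$ in \eqref{eq:tau} gives $0=|x_1(t)-x_j(t)|$, so by the uniqueness part of Lemma \ref{lem:tau} we have $\tau_{1j}(t)=0$ and $\wx{j}{1}=x_1(t)$, making the corresponding term in \eqref{eq:0} vanish. Hence every summand on the right-hand side of \eqref{eq:0} for $i=1$ is non-negative, and symmetrically every summand for $i=N$ is non-positive. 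Consequently $d_x$ is non-increasing and admits a limit $\bar d\geq 0$.

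Next I would argue by contradiction, assuming $\bar d>0$. Then $d_x(t)\geq\bar d$ for every $t\geq 0$, so $x_1(t)<x_N(t)$ throughout, and the strict conclusions of \eqref{ordering} apply to the extremal pair $(1,N)$. Isolating only the $j=1$ contribution in $\dot x_N$ and discarding the remaining non-positive summands yields
\[
   \dot x_N \leq -\frac{1}{N-1}\,\psi\bigl(|x_N-\wx{1}{N}|\bigr)\,|x_N-\wx{1}{N}|.
\]
By \eqref{dist_est} combined with the monotonicity just established,
$|x_N-\wx{1}{N}|>\tfrac12|x_N-x_1|\geq \tfrac{\bar d}{2}$,
while Lemma \ref{lem:tau} yields the uniform upper bound
$|x_N-\wx{1}{N}|=\c\tau_{N1}\leq \tfrac{\c}{\c-\s}d_x(0)$.
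The argument of $\psi$ therefore lies in a fixed compact subinterval of $(0,\infty)$, on which continuity together with the assumption $\psi>0$ on $(0,\infty)$ forces $\psi\geq \psi_{\min}>0$ for some constant $\psi_{\min}$. Consequently $\dot x_N\leq -\frac{\psi_{\min}\bar d}{4(N-1)}<0$ for all $t\geq 0$; integration drives $x_N(t)\to-\infty$, contradicting the obvious lower bound $x_N(t)\geq x_1(0)$. Hence $\bar d=0$.

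The substantial work is already done by Lemmas \ref{lem:tau} and \ref{lem:nocrossing}, so the obstacle is conceptual rather than computational: one must identify the right quantity to bound. The trick is to extract only the $j=1$ term from $\dot x_N$, which works because the one-dimensional ordering ensures the leftmost particle always exerts a measurable pull on the rightmost via \eqref{dist_est} as long as the diameter remains bounded away from zero. This ordering device has no counterpart in several space dimensions, which is presumably why the authors can only prove conditional consensus in Section \ref{sec:multiD}.
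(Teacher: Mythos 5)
Your proof is correct and follows essentially the same route as the paper: preserved ordering gives $d_x=x_N-x_1$, Lemma \ref{lem:nocrossing} gives monotonicity of the extremal agents, and isolating the $j=1$ term in $\dot x_N$ together with the lower bound \eqref{dist_est} on $|x_N-\wx{1}{N}|$ yields a uniform negative drift when $\bar d>0$. The only (immaterial) difference is the final contradiction — you integrate $\dot x_N$ to violate boundedness of $x_N$, whereas the paper observes that a uniformly negative $\tot{}{t}d_x$ is incompatible with $d_x\to \bar d>0$.
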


\begin{proof}
Note that due to Lemma \ref{lem:ordering1d}, we have for the group diameter $d_x$ defined in \eqref{def:diam},
\(  \label{diam:1d}
   d_x(t) = x_N(t) - x_1(t) \qquad\mbox{for all } t\geq 0.
\)
\rev{Assumption \eqref{cond:s} implies the global Lipschitz continuity of the solution trajectories,
\[
   |\dot x_i(t)| \leq  \frac{1}{N-1} \sum_{j\neq i} \psi(|\wxji - x_i|) \left| \wxji - x_i \right| \leq \s, \qquad \mbox{for all } i=1,\ldots,N, \quad t>0,
\]
therefore, Lemma \ref{lem:nocrossing} gives}
\[
   \dot x_1 = \frac{1}{N-1} \sum_{j=1}^N \widetilde\psi_{1j} \left( \wx{j}{1} - x_1 \right) \geq 0,
\]
and similarly $\dot x_N \leq 0$.
Consequently, with \eqref{diam:1d}, $\tot{}{t} d_x \leq 0$ and there exists $d\geq 0$
such that $d_x(t) \to d$ as $t\to\infty$.
We shall prove that $d=0$.
Indeed, we have
$x_N(t) - x_1(t) \geq d$ for all $t\geq 0$.
By \eqref{dist_est} we have
\[
   d < x_N-\wx{1}{N} \leq d_x(0) \qquad\mbox{for } t>0,
\]
which in turn gives that $\psi_{N1} = \psi(|x_N-\wx{1}{N}|)$
is uniformly bounded from below by some $\underline{\psi} >0$ for all $t>T$.
Consequently, using $\dot x_1\geq 0$ and \eqref{ordering}, we have
\[
   \tot{}{t} d_x = 
      \dot x_N - \dot x_1 \leq \frac{1}{N-1} \widetilde\psi_{N1}  \left( \wx{1}{N} - x_N \right)  
      < \frac{-\underline{\psi}\, d}{N-1} \qquad \mbox{for all } t>0,
\]
which would be in contradiction to $d_x(t) \to d$ as $t\to\infty$ if $d>0$.
We conclude that $d=0$.
\end{proof}

\section{Multidimensional case}\label{sec:multiD}
Let us now consider the system \eqref{eq:tau}--\eqref{IC:0} in posed in $d$ spatial dimensions, $d\geq 2$.
The local and global existence and uniqueness of solutions, given an initial datum $x^0\in C_\s((-\infty,0];\R^d)^N$,
is obtained analogously as in Section \ref{sec:twoAgents}.
Let us point out the following principial difference to the one-dimensional case treated in Section \ref{sec:1D}.
Namely, Lemmata \ref{lem:ordering1d} and \ref{lem:nocrossing} imply that in 1D the solution trajectories always remain
inside the convex hull spanned by the initial datum at time $t=0$
(which, due to the ordering \eqref{ordering:IC:1D}, is the interval $[x_1^0(0),x_N^0(0)]$).
An analogous property does not seem to be true for the multidimensional case. In particular,
it is possible to construct initial data such that $\wxji$ for some $i$, $j$ falls outside
of the convex hull spanned by $\{x_1(0),\dots,x_N(0)\}$.
Therefore, we do not have a universal control of the convex hull spanned by the solution trajectories
as it evolves in time, and, consequently, are not able to take a geometric-like approach for proving
convergence to consensus. Instead, we are forced to follow a somehow "rougher" approach,
based on controlling the radius of the agent group, defined as
\(  \label{def:diam}
   R_x(t) := \max_{i\in\{1,\dots,N\}} |x_i(t)|.
\)
The following lemma shows that the radius is bounded uniformly in time by the radius of the initial datum,
defined as
\(  \label{def:R0}
   R_x^0 := \max_{t\in [-S^0,0]} R_x(t),\qquad \mbox{with } S^0=\frac{d_x(0)}{\s-\c}.
\)

\begin{lemma}\label{lem:Rxbound}
For $\s<\c$ let the initial datum $x^0\in C_\s([-S^0,0];\R^d)^N$ 
and let $R_x^0$ be given by \eqref{def:R0}.
Then the diameter $R_x$ defined in \eqref{def:diam} satisfies
\[
   R_x(t) \leq R_x^0 \qquad\mbox{for all } t\geq 0.
\]
\end{lemma}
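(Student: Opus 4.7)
The strategy is to derive an integrated Gronwall-type bound for $|x_i(t)|^2$ that self-consistently controls the radius via its own running supremum. The key observation is that $\wxji(t)=x_j(t-\taij(t))$ with $\taij(t)\geq 0$, so the delayed position is always evaluated at a past time, and can therefore be majorised by the running maximum of $R_x$.

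First, I take the inner product of \eqref{eq:0} with $x_i$ and apply the Young-type inequality $2\,x_i\cdot \wxji \leq |x_i|^2 + |\wxji|^2$, which yields the pointwise estimate
\[
   \tot{}{t}|x_i(t)|^2 \leq \frac{1}{N-1}\sum_{j=1}^{N}\psij(t)\bigl(|\wxji(t)|^2-|x_i(t)|^2\bigr).
\]
Next, I introduce the non-decreasing continuous running radius $R^*(t):=\max_{s\in[-S^0,t]} R_x(s)$. Since $\taij(t)\geq 0$, the bound $|\wxji(t)|\leq R^*(t)$ holds automatically, and setting $A_i(t):=\frac{1}{N-1}\sum_j \psij(t)$ I obtain the linear differential inequality
\[
   \tot{}{t}|x_i(t)|^2 + A_i(t)\,|x_i(t)|^2 \leq A_i(t)\,R^*(t)^2.
\]
By the global boundedness of $\psi$ implied by \eqref{cond:s}, I have $0\leq A_i(t)\leq \Norm{\psi}_{L^\infty(0,\infty)}$.

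Now I integrate this inequality with the initial bound $|x_i(0)|^2\leq (R_x^0)^2$ and use that $R^*$ is non-decreasing. A standard Gronwall computation yields
\[
   |x_i(t)|^2 \leq (R_x^0)^2 + \Bigl(1-e^{-\int_0^t A_i(s)\,\d s}\Bigr)\bigl(R^*(t)^2-(R_x^0)^2\bigr).
\]
Taking the maximum over $i$ and setting $M_T:=\max_{t\in[0,T]}\bigl(R_x(t)^2-(R_x^0)^2\bigr)_+$, I note that $R^*(t)^2-(R_x^0)^2\leq M_T$ for every $t\in[0,T]$. Consequently,
\[
   M_T \leq \Bigl(1-e^{-\Norm{\psi}_{L^\infty(0,\infty)} T}\Bigr)M_T,
\]
which forces $M_T\, e^{-\Norm{\psi}_{L^\infty(0,\infty)} T}\leq 0$, hence $M_T=0$. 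Since $T>0$ is arbitrary, $R_x(t)\leq R_x^0$ for all $t\geq 0$.

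The main obstacle is the usual chicken-and-egg issue introduced by the state-dependent delay: any pointwise bound on $|x_i(t)|$ depends on bounds on the trajectories at earlier times, which are themselves what we are trying to establish. The device of comparing against the running supremum $R^*(t)$ bypasses this obstruction, because $R^*(t)$ automatically majorises the delayed quantities $|\wxji|$ regardless of the value of $\taij$, and ultimately appears on both sides of the integrated inequality, allowing the strictly positive factor $e^{-\Norm{\psi}_{L^\infty(0,\infty)} T}$ to collapse the estimate.
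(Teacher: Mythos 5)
Your proof is correct, but it takes a genuinely different route from the paper's. The paper fixes $\eps>0$ and argues by contradiction at the first time $T$ at which $R_x$ reaches the level $R_x^0+\eps$: choosing an index $i$ with $R_x\equiv|x_i|$ just to the right of $T$, it computes $\tot{}{t+}|x_i(T)|^2$ and uses $|x_j(T-\taij(T))|\leq R_x^0+\eps=|x_i(T)|$ together with the Cauchy--Schwarz inequality to conclude that this one-sided derivative is strictly negative unless the system is already at equilibrium, contradicting $\tot{}{t+}R_x(T)\geq 0$; letting $\eps\to 0$ concludes. You instead close a Gronwall loop: Young's inequality yields the linear differential inequality $\tot{}{t}|x_i|^2+A_i|x_i|^2\leq A_i R^*(t)^2$ driven by the running supremum $R^*$, and after integration the strictly positive factor $e^{-CT}$ survives in front of $M_T$ and forces $M_T=0$. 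Your route avoids two delicate points of the paper's argument --- the selection of a single maximizing index on an interval to the right of $T$, and the discussion of the equality case in Cauchy--Schwarz --- and it dispenses with the $\eps$-regularization altogether; the paper's argument in exchange yields the sharper pointwise information that $\tot{}{t+}R_x\leq 0$ whenever $R_x$ sits at its running maximum, which is more geometric and closer in spirit to the invariance-of-convex-hull arguments used elsewhere for consensus models. Two minor remarks: the sum defining $A_i$ has $N$ terms over the weight $N-1$, so the correct uniform bound is $A_i\leq\frac{N}{N-1}\Norm{\psi}_{L^\infty(0,\infty)}$ rather than $\Norm{\psi}_{L^\infty(0,\infty)}$ --- harmless, since your final step only needs some finite constant; and, exactly as in the paper's own proof, you implicitly use that the delayed times $t-\taij(t)$ do not fall below $-S^0$, so that $R^*$ indeed majorises $|\wxji|$; this is fine but deserves a sentence.
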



\begin{proof}
Let us fix $\eps >0$. 
We shall prove that for all $t\geq 0$
\(  \label{R-eps}
   R_x(t) \leq R_x^0 + \eps.
\)
Obviously, $R_x(0) \leq R_x^0$, so that by continuity,
\eqref{R-eps} holds on the maximal interval $[0,T]$ for some $T>0$.
For contradiction, let us assume that $T<+\infty$.
Then we have
\(   \label{R-cont}
   R_x(T) = R_x^0 + \eps \qquad\mbox{and}\qquad \tot{}{t+}  R_x(T) \geq 0,
\)
where $\tot{}{t+}  R_x(T)$ denotes the right-hand side derivative of $R_x$ at $t=T$.
By continuity, there exists an index $i\in\{1,\dots,N\}$ such that $R_x(t) \equiv |x_i(t)|$
for $t\in (T,T+\delta)$ for some $\delta>0$.
Then we calculate
\[
   \tot{}{t+} R_x(T) = \tot{}{t+} |x_i(T)|^2 &=& \frac{2}{N-1} \sum_{j=1}^N \widetilde\psi_{ij} \left[x_j(T-\tau_{ij}(T))- x_i(T)\right]\cdot x_i(T) \\
     &=& \frac{2}{N-1} \sum_{j=1}^N  \widetilde\psi_{ij} \left[ x_j(T-\tau_{ij}(T))\cdot x_i(T) - |x_i(T)|^2\right],
\]
where $\tot{}{t+}$ denotes the derivative with respect to $t$ from the right-hand side.
By definition, we have for all $j\in\{1,\dots,N\}$,
\[
   |x_j(T-\tau_{ij}(T))| \leq R_x^0 + \eps = |x_i(T)|,
\]
so that the Cauchy-Schwarz inequality yields $\tot{}{t+} R_x(T) \leq 0$.
Moreover, we can have $\tot{}{t+} R_x(T) = 0$ only if equality takes place
in the Cauchy-Schwarz inequality, i.e., if
\[
   x_j(T-\tau_{ij}(T))\cdot x_i(T) = |x_i(T)|^2 \qquad\mbox{for all } j\in\{1,\dots,N\}.
\]
That would mean that $x_j(T-\tau_{ij}(T)) = x_i(T)$ for all $j$, which in turn gives $\tau_{ij}(T)=0$ and $x_j(T) = x_i(T)$ for all $j$,
i.e., the system reached equilibrium at time $T$ and does not evolve further.
Otherwise, we have $\tot{}{t+} R_x(T) < 0$, which is a contradiction to \eqref{R-cont}.
Consequently, \eqref{R-eps} is indeed valid for all $t\geq 0$,
and we conclude by taking the limit $\eps\to 0$.
\end{proof}

The next lemma provides a control of the diameter $d_x(t)$ of the solution.

\begin{lemma}\label{lem:shrinkage}
For $\s<\c$ let the initial datum $x^0\in C_\s([-S^0,0];\R^d)^N$ 
and let $R_x^0$ be given by \eqref{def:R0}.
Then we have
\[
    \tot{}{t} d_x \leq \left[  \frac{2\s}{\c-\s}\overline{\psi} - \frac{N}{N-1}\underline{\psi} \right] d_x \qquad\mbox{for almost all } t\in(0,\infty),
\]
where
\(   \label{psipsi}
    \underline\psi := \min_{r\in [0,R_x^0]} \psi(r), \qquad
    \overline\psi := \max_{r\in [0,R_x^0]} \psi(r).
\)
\end{lemma}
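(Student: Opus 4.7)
The plan is to compute the a.e.\ derivative of $d_x$ by selecting, at each differentiability point, a pair $(i,j)$ attaining the maximum $|x_i - x_j| = d_x$, substituting the dynamics \eqref{eq:0}, and splitting the resulting expression into a ``non-delayed'' part that telescopes to the desired $-\tfrac{N}{N-1}\underline{\psi}\,d_x^2$ contribution and a delay correction controlled by Lemma~\ref{lem:tau}. First I would note that each $x_i$ is $\s$-Lipschitz thanks to \eqref{cond:s}, so $d_x$, being the pointwise maximum over finitely many Lipschitz pairwise distances, is itself Lipschitz and hence differentiable for a.e.\ $t>0$. At any such $t$ with $d_x(t)>0$ (the case $d_x(t)=0$ is trivial), the envelope (Danskin) principle identifies
\[
   \tot{}{t} d_x = \frac{(x_i - x_j)\cdot(\dot x_i - \dot x_j)}{d_x}
\]
for any pair $(i,j)$ realising the maximum; I set $u := x_i - x_j$, so that $|u|=d_x$.

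The main algebraic step is to decompose $\wx{k}{i} - x_i = (x_k - x_i) + (\wx{k}{i} - x_k)$ (and analogously for $j$) and then rewrite each influence rate as $\widetilde\psi_{ik} = \underline\psi + (\widetilde\psi_{ik} - \underline\psi)$, the bracket being nonnegative thanks to Lemma~\ref{lem:Rxbound}, which confines all arguments of $\psi$ to the compact interval on which $\underline\psi$ is defined. The crucial geometric observation is that the maximality of $|u|$ forces $(x_k - x_i)\cdot u \leq 0$ and $(x_k - x_j)\cdot u \geq 0$ for every $k$: expanding $|x_k - x_j|^2 = |x_k - x_i|^2 + 2(x_k - x_i)\cdot u + |u|^2 \leq |u|^2$ yields the first inequality, and the second follows by symmetry. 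The $\underline\psi$-piece of the non-delayed contribution then telescopes,
\[
   \frac{\underline\psi}{N-1}\sum_{k=1}^N \bigl[(x_k - x_i) - (x_k - x_j)\bigr]\cdot u = -\frac{N\underline\psi}{N-1}\, d_x^2,
\]
producing precisely the negative term with the coveted coefficient. The corresponding remainder collects nonnegative excesses $\widetilde\psi_{ik}-\underline\psi$ multiplied by inner products of known sign, and thanks to the geometric observation it is again $\leq 0$, only strengthening the estimate.

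It remains to estimate the delay error, i.e.\ the contribution of $\widetilde\psi_{ik}(\wx{k}{i} - x_k)\cdot u$ and its $j$-analogue. Lemma~\ref{lem:tau} applied to the trajectory $x_k$ with base point $x_i(t)$ bounds $\tau_{ik} \leq |x_i - x_k|/(\c-\s)$, and the $\s$-Lipschitz regularity of $x_k$ then gives $|\wx{k}{i} - x_k| \leq \s\tau_{ik} \leq \tfrac{\s}{\c-\s}|x_i - x_k| \leq \tfrac{\s}{\c-\s}\,d_x$, with the analogous bound when $i$ is replaced by $j$. Combined with $\widetilde\psi_{ik},\widetilde\psi_{jk} \leq \overline\psi$ and summed over $k$ and the two base indices, this yields an absolute bound of $\tfrac{2\s\overline\psi}{\c-\s}\,d_x^2$ on the delay error. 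Dividing by $d_x$ delivers the claim. The one point that needs care in the execution is the telescoping identity: a naive term-by-term absolute-value bound on the non-delayed part only salvages a factor $2\underline\psi/(N-1)$ coming from the single dominant pair $k\in\{i,j\}$, and the desired factor $N/(N-1)$ is recovered solely by keeping the sum together and exploiting $\sum_k[(x_k - x_i) - (x_k - x_j)] = N(x_j - x_i)$.
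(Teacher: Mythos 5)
Your proposal is correct and follows essentially the same route as the paper: the same decomposition $\wx{k}{i}-x_i=(x_k-x_i)+(\wx{k}{i}-x_k)$, the same bound $|\wx{k}{i}-x_k|\le \s\tau_{ik}\le \tfrac{\s}{\c-\s}d_x$ from Lemma \ref{lem:tau}, the same sign observation from maximality of $|x_i-x_j|$, and the same telescoping that yields the coefficient $\tfrac{N}{N-1}\underline\psi$. The only (immaterial) differences are that you justify the a.e.\ derivative via Rademacher/Danskin rather than the paper's countable decomposition into intervals of constant maximizing pair, and you split $\widetilde\psi_{ik}=\underline\psi+(\widetilde\psi_{ik}-\underline\psi)$ where the paper directly uses $\widetilde\psi_{ik}\ge\underline\psi$ against a nonpositive inner product.
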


\begin{proof}
Due to the continuity of the solution trajectories $x_i(t)$,
there is an at most countable system of open, mutually disjoint
intervals $\{\mathcal{I}_\sigma\}_{\sigma\in\N}$ such that
\[
   \bigcup_{\sigma\in\N} \overline{\mathcal{I}_\sigma} = [0,\infty)
\]
and for each ${\sigma\in\N}$ there exist indices $i(\sigma)$, $k(\sigma)$
such that
\[
   d_x(t) = |x_{i(\sigma)}(t) - x_{k(\sigma)}(t)| \quad\mbox{for } t\in \mathcal{I}_\sigma.
\]
Then, using the abbreviated notation $i:=i(\sigma)$, $k:=k(\sigma)$,
we have for every $t\in \mathcal{I}_\sigma$,
\(  \label{shrinkage:1}
   \frac12 \tot{}{t} d_x(t)^2 &=& \frac12 \tot{}{t} |x_i - x_k|^2 \\
    &=& \frac{1}{N-1} \sum_{j=1}^N \widetilde\psi_{ij} \bigl(\wxji - x_i\bigr)\cdot (x_i-x_k)
      - \frac{1}{N-1} \sum_{j=1}^N \widetilde\psi_{kj} \bigl(\wx{j}{k} - x_k\bigr)\cdot (x_i-x_k).   \nonumber
\)
Let us work on the first term of the right-hand side. We have for any $j\in\{1,\dots,N\}$,
\(   \label{shrinkage:2}
   \widetilde\psi_{ij} \left(\wxji - x_i\right)\cdot (x_i-x_k) = \widetilde\psi_{ij} \left(\wxji - x_j\right)\cdot (x_i-x_k) + \widetilde\psi_{ij} \left(x_j- x_i\right)\cdot (x_i-x_k).
\)
By \eqref{eq:tau} we have
\[
   |\wxji - x_i| = \c \tau_{ij}.
\]
On the other hand, since $x_j$ can travel with speed at most $\s$,
\[
   |\wxji - x_j| \leq \s \tau_{ij}.
\]
Therefore, by the triangle inequality,
\[
   \c \tau_{ij} = |\wxji - x_i| \leq |x_i-x_j| + |\wxji-x_j| \leq d_x + \s\tau_{ij}, 
\]
so that
\[
    |\wxji-x_j| \leq \frac{\s}{\c-\s} d_x.
\]
Consequently, with the bound $\psi\leq \overline{\psi}$ given by \eqref{psipsi} and the Cauchy-Schwarz inequality,
we estimate the first term of the right-hand side of \eqref{shrinkage:2} by
\[
   \widetilde\psi_{ij} \left(\wxji - x_j\right)\cdot (x_i-x_k) \leq \overline{\psi} |\wxji-x_j| |x_i-x_k| \leq \frac{\s\overline{\psi}}{\c-\s} d_x^2.
\]
For the second term in \eqref{shrinkage:2}, we observe, using the Cauchy-Schwarz inequality,
\[
   (x_j- x_i)\cdot (x_i-x_k) &=& (x_j-x_k)\cdot(x_i-x_k) - |x_i-x_k|^2 \\
      &\leq& |x_i-x_k| \bigl( |x_j-x_k| - |x_i-x_k| \bigr) \leq 0,
\]
since, by definition, $|x_j-x_k| \leq d_x = |x_i-x_k|$.
Moreover, with Lemma \ref{lem:Rxbound} we have
\[
   \widetilde \psi_{ij} \geq \min_{r\in [0,R_x^0]} \psi(r) = \underline{\psi} > 0.
\]
Consequently,
\[
    \widetilde \psi_{ij} \left(x_j- x_i\right)\cdot (x_i-x_k) \leq \underline{\psi} \, (x_j- x_i)\cdot (x_i-x_k).
\]
Carrying out analogous steps for the second term of the right-hand side of \eqref{shrinkage:1},
we finally obtain
\[
    \frac12 \tot{}{t} d_x^2 &\leq& \frac{2\s\overline{\psi}}{\c-\s} d_x^2 + \frac{\underline{\psi}}{N-1} \sum_{j=1}^N 
      \bigl[(x_j- x_i) - (x_j-x_k)\bigr]\cdot (x_i-x_k) \\
      &=&  \left[ \frac{2\s}{\c-\s}\overline{\psi} - \frac{N}{N-1} \underline{\psi} \right] d_x^2.
\]
This immediately gives the statement.
\end{proof}

Direct consequence of Lemma \ref{lem:shrinkage} is the following result about asymptotic consensus
for the system \eqref{eq:tau}--\eqref{IC:0} in the multidimensional setting.

\begin{theorem}\label{thm:shrinkage}
For $\s<\c$ let the initial datum $x^0\in C_\s([-S^0,0];\R^d)^N$, 
let $R_x^0$ be given by \eqref{def:R0} and let $\underline\psi$, $\overline\psi$ be given by \eqref{psipsi}.
If the condition
\(   \label{multiDcond}
   \frac{2\s}{\c-\s} < \frac{N\underline{\psi}}{(N-1)\overline{\psi}}
\)
is verified, then all solutions of \eqref{eq:tau}--\eqref{IC:0} reach asymptotic consensus,
i.e., $d_x(t) \to 0$, exponentially fast as $t\to\infty$.
\end{theorem}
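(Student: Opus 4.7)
The plan is essentially to read off the conclusion from Lemma \ref{lem:shrinkage}. The assumption \eqref{multiDcond} is, after multiplying through by $(N-1)\overline{\psi} > 0$, exactly equivalent to
\[
   \lambda := \frac{N}{N-1}\underline{\psi} - \frac{2\s}{\c-\s}\overline{\psi} > 0,
\]
so that the bracket appearing in the differential inequality of Lemma \ref{lem:shrinkage} is negative and equal to $-\lambda$.

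First I would invoke Lemma \ref{lem:shrinkage} to write
\[
   \tot{}{t} d_x(t) \leq -\lambda\, d_x(t) \qquad \mbox{for a.e.\ } t\in(0,\infty).
\]
Second, since $d_x$ is Lipschitz continuous (being the maximum over finitely many pairwise distances of Lipschitz trajectories, by \eqref{cond:s}), it is absolutely continuous, so the classical Gr\"onwall inequality applies. Integrating yields $d_x(t) \leq d_x(0) \, e^{-\lambda t}$ for all $t\geq 0$, and thus $d_x(t)\to 0$ exponentially fast, which is the desired asymptotic consensus.

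There is no real obstacle here; the mild point is only to confirm that the differential inequality of Lemma \ref{lem:shrinkage}, stated almost everywhere, can indeed be integrated to give the pointwise exponential bound, which follows from the absolute continuity of $d_x$ noted above.
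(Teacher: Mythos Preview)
Your proof is correct and is exactly the argument the paper has in mind: it states Theorem~\ref{thm:shrinkage} as a ``direct consequence of Lemma~\ref{lem:shrinkage}'' without spelling out the Gr\"onwall step, which you have supplied. The only additional care you (rightly) take is to note the absolute continuity of $d_x$ so that the a.e.\ differential inequality can be integrated.
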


\rev{
Condition \eqref{multiDcond} can be interpreted, for a fixed influence function $\psi$, as a smallness condition
on the speed limit $\s$ with respect to the speed of light $\c$. For instance, if we choose $\psi$ constant on $[0,R_x^0]$,
then $\underline{\psi} = \overline{\psi}$ and \eqref{multiDcond} it is satisfied for all $N\in\N$ if $3\s \leq \c$. 
Alternatively, for fixed $\s$ and $\c$ it can be interpreted
as a condition on slow enough decay of $\psi$. We admit that \eqref{multiDcond} is relatively restrictive,
reducing the strength of claim of Theorem \ref{thm:shrinkage}.
In particular, we hypothesize that also in the multidimensional setting
all solutions of \eqref{eq:tau}--\eqref{IC:0} reach asymptotic consensus
as long as $\psi(s)>0$ for all $s\geq 0$, regardless of the particular values of
$\s$, $\c$, $\underline{\psi}$ and $\overline{\psi}$.
However, the presence of the state-dependent and heterogeneous delays $\tau_{ij}$
prohibits the application of all techniques invented so far for study of consensus systems with delay
that are known to us. Consequently, a proof of a (hypothetical) optimal consensus result in multiple spatial dimensions
seems to require development of new methods and will be subject of a future work.
}

\section*{Acknowledgment}
JH acknowledges the support of the KAUST baseline funds.



\begin{thebibliography}{99}

\bibitem{Bha}
A. Bhattacharyya, M. Braverman, B. Chazelle, and H.L. Nguyen:
\emph{On the convergence of the Hegselmann-Krause system}.
Proceedings of the 4th conference on Innovations in Theoretical Computer Science,
pp. 61--66. ACM, New York (2013).

\bibitem{Blondel}
V.D. Blondel, J.M. Hendrickx and J.N. Tsitsiklis:
\emph{On Krause's multi-agent consensus model with state-dependent connectivity.}
IEEE Trans. Autom. Control 54 (2009), 2586--2597.

\bibitem{Camazine}
S. Camazine, J. L. Deneubourg, N.R. Franks, J. Sneyd, G. Theraulaz and E. Bonabeau:
\emph{Self-Organization in Biological Systems}. Princeton University Press, Princeton, NJ, 2001.


\bibitem{Canuto}
C. Canuto, F. Fagnani and P. Tilli:
\emph{An Eulerian approach to the analysis of Krause's consensus models}.
SIAM J. Control Optim. 50 (2012), 243--265.

\bibitem{Carro}
A. Carro, R. Toral, M. San Miguel:
\emph{The role of noise and initial conditions in the asymptotic solution of a bounded confidence, continuous-opinion model.}
J. Stat. Phys. 151 (2013), 131--149.

\bibitem{Castellano}
C. Castellano, S. Fortunato and V. Loreto: \emph{Statistical physics of social dynamics}.
Rev. Mod. Phys., 81, (2009), 591--646.

\bibitem{Choi}
Y.-P. Choi, A. Paolucci and C. Pignotti:
\emph{Consensus of the Hegselmann-Krause opinion formation model with time delay}.
Preprint (2019).

\bibitem{Hamman}
H. Hamman: \emph{Swarm Robotics: A Formal Approach}.
Springer, 2018.


\bibitem{HK}
R. Hegselmann and U. Krause, \emph{Opinion dynamics and bounded confidence models, analysis, and simulation},
J. Artif. Soc. Soc. Simul., 5, (2002), 1--24.

\bibitem{JM}
P.E. Jabin and S. Motsch: \emph{Clustering and asymptotic behavior in opinion formation.}
J. Differential Equations 257 (2014), 4165--4187.

\bibitem{Jadbabaie}
A. Jadbabaie, J. Lin and A. S. Morse: \emph{Coordination of groups of mobile autonomous agents
using nearest neighbor rules}.
IEEE Trans. Automat. Control, 48, (2003), 988--1001.

\bibitem{Krugman}
P. Krugman: \emph{The Self Organizing Economy.} Blackwell Publishers, 1995.

\bibitem{Lu}
J. Lu, D. W. C. Ho and J. Kurths: \emph{Consensus over directed static networks with arbitrary
finite communications delays}.
Phys. Rev. E, 80 (2009), 066121.

\bibitem{Mohajer}
S. Mohajer and B. Touri:
\emph{On convergence rate of scalar Hegselmann-Krause dynamics}.
Proceedings of the IEEE American Control Conference (ACC) (2013).

\bibitem{Moreau}
L. Moreau: \emph{Stability of multiagent systems with time-dependent communication links.}
IEEE Trans. Autom. Control 50(2), 169--182 (2005).

\bibitem{MT}
S. Motsch and E. Tadmor: \emph{Heterophilious dynamics enhances consensus}.
SIAM Rev. 56(4), 577--621 (2014).

\bibitem{Naldi}
G. Naldi, L. Pareschi and G. Toscani (eds.):
\emph{Mathematical Modeling of Collective behaviour in Socio-Economic and Life Sciences},
Series: Modelling and Simulation in Science and Technology, Birkh\"auser, 2010.


\bibitem{Smith}
{H. Smith:}
\emph{An Introduction to Delay Differential
Equations with Applications to the Life Sciences.}
Springer New York Dordrecht Heidelberg London, 2011.

\bibitem{Valentini}
G. Valentini: \emph{Achieving Consensus in Robot Swarms: Design and Analysis of Strategies for the best-of-n Problem.}
Springer, Studies in Computational Intelligence, Vol. 706, 2017.

\bibitem{Vicsek}
T. Vicsek and A. Zafeiris: \emph{Collective motion}.
Phys. Rep., 517 (2012), 71--140.

\bibitem{Wang}
C. Wang, Q. Li, W. E, B. Chazelle:
\emph{Noisy Hegselmann-Krause Systems: Phase Transition and the 2R-Conjecture}.
J Stat Phys (2017) 166:1209--1225.

\bibitem{Wedin}
E. Wedin and P. Hegarty: \emph{A quadratic lower bound for the convergence rate in the one-dimensional
Hegselmann-Krause bounded confidence dynamics}.
Discret. Comput. Geom. 53(2), 478--486 (2015).

\bibitem{Xu}
H. Xu, H.Wang and Z. Xuan: \emph{Opinion dynamics: a multidisciplinary review and perspective on future research}.
Int. J. Knowl. Syst. Sci., 2, (2011), 72--91.

\bibitem{telegraph}
C. Williams: \emph{The \$300m cable that will save traders milliseconds.}
The Telegraph, 11 Sep 2011.
{\tt https://www.telegraph.co.uk/technology/news/8753784/The-300m-cable-that-will-save-traders-milliseconds.html}

\end{thebibliography}
\end{document}